\newtheorem{theorem}{Theorem}[section]
\newtheorem*{theorem*}{Theorem}
\newtheorem{lemma}[theorem]{Lemma}
\newtheorem{proposition}[theorem]{Proposition}
\newtheorem*{conjecture*}{Conjecture}
\newtheorem{example}[theorem]{Example}
\newtheorem{remark}[theorem]{Remark}
\newcommand{\ol}[1]{\overline{#1}}
\newcommand{\opname}[1]{\operatorname{\mathsf{#1}}}
\renewcommand{\mod}{\opname{mod}\nolimits}
\newcommand{\dimv}{\underline{\dim}\,}
\newcommand{\ind}{\opname{ind}}
\newcommand{\Z}{\mathbb{Z}}
\newcommand{\N}{\mathbb{N}}
\newcommand{\Q}{\mathbb{Q}}
\newcommand{\C}{\mathbb{C}}
\renewcommand{\P}{\mathbb{P}}
\newcommand{\Hom}{\opname{Hom}}
\newcommand{\go}{\opname{G_0}}
\newcommand{\Ext}{\opname{Ext}}
\newcommand{\Aut}{\opname{Aut}}
\newcommand{\ca}{{\mathcal A}}
\newcommand{\ch}{{\mathcal H}}
\newcommand{\ct}{{\mathcal T}}
\newcommand{\E}{{\mathbb E}}
\renewcommand{\hat}[1]{\widehat{#1}}
\begin{document}

\title{Quantum dilogarithm identities and cyclic quivers}

\date{Last modified on \today}
\author{Changjian Fu}
\address{Changjian Fu\\Department of Mathematics\\SiChuan University\\610064 Chengdu\\P.R.China}
\email{changjianfu@scu.edu.cn}

\author{Liangang Peng}
\address{
Liangang Peng\\ Department of Mathematics\\SiChuan
University\\610064 Chengdu\\P.R.China} \email{penglg@scu.edu.cn}
\maketitle
\begin{abstract}
We study quantum dilogarithm identities for cyclic quivers following Reineke's idea via Ringel-Hall algebra approach.  For any given discrete stability function for the cyclic quiver $\Delta_n$ with $n$ vertices, we obtain certain cyclic quantum dilogarithm identities of order $n$ in the sense of Bytsko and Volkov.
\end{abstract}
\section{Introduction}

Quantum dilogarithm which is a $q$-deformation of the classical dilogarithm function, appears naturally in many different branches of mathematics such as discrete dynamic system~\cite{Schutzenberger53,Faddeev-Kashaev94,Faddeev-Volkov}, cluster theory~\cite{Fock-Goncharov09,Keller10} and motivic Donaldson-Thomas invariants~\cite{Kontsevich-Soibelman08}, etc. For more details and the history of quantum dilogarithm, we refer to ~\cite{Fock-Goncharov09,Kashaev-Nakanishi11} and the references given there.

Inspired
by Kontsevich-Soibelman's frame work~\cite{Kontsevich-Soibelman08} of motivic Donaldson-Thomas invariants for $3$-Calabi-Yau categories, Reineke~\cite{Reineke2010}
studied some factorization formula for certain automorphism of
Poisson algebra assocaited with a finite  acyclic quiver by using Ringel-Hall algebra approach, which is related to
the wall-crossing formula in ~\cite{Kontsevich-Soibelman08}.
 Keller~\cite{Keller10} reformulated
Reineke's Theorem~\cite{Reineke2010} for Dynkin quiver  with discrete stability structure in the setting
of quantum dilogarithm, which can be interpreted as stating that the
refine DT-invariant as well as non commutative DT-invariant of a
Dynkin quiver are well-defined. It has been conjectured in ~\cite{Keller10} that the statement for Dynkin quivers may be generalized to any Jacobian algebras arising from quivers with polynomial potentials.

One of the key ingredients in Reineke's work~\cite{Reineke2010,Keller10} is the existence of the so-called integration map from the opposite completed Ringel-Hall algebra to a suitable quantum torus. Such an integration map always exists for any hereditary abelian categories. Hence Reineke's work may be generalized to quiver with oriented cycles.
Then this note is devoted to study the quantum dilogarithm identities for cyclic quivers.
 In contrast to the case of acyclic quivers, there does not exist a discrete stability function in the sense of Keller~\cite{Keller10}(which is called {\it completely discrete} in our setting, {\it cf.} Section~\ref{ss:defintion-stability}) for a cyclic quiver, which is very useful to derive quantum dilogarithm identities via Ringel-Hall algebra approach. Fortunately, our result shows that the discrete stability function in the sense of physics  is enough to apply the Ringel-Hall algebra approach for cyclic quivers.

The paper is organized as follows: in Section~\ref{ss:defintion-stability}, after given the definition of  discrete stability function and completely discrete stability function, we give a characterization of stable objects for a given discrete stability function for cyclic quivers(Theorem~\ref{t:discrete-structure}), which enables us to obtain quantum dilogarithm identities for cyclic quivers in Section~\ref{ss:quantum-dilogarithm identities}(Theorem~\ref{t:quantum-identities}).
In~\cite{BytskoVolkov2013}, Bytsko and Volkov introduced cyclic quantum dilogarithm  and obtained certain cyclic quantum dilogarithm identities of order $3$. In Section~\ref{ss:cyclic quantum}, we prove that for any given discrete stability function, one obtains certain cyclic quantum dilogarithm identities of order $n$(Theorem~\ref{t:cyclic quantum-identities}). Section~\ref{s:examples} is devoted to study certain examples of quantum dilogarithm identities related to Jacocian algebra of type $A_2$ and quiver algebras of type $A_{n-1}$.

{\bf Acknowledgments.} This work has been presented by the first-named author on the seminar of representation theory of algebra in Nanjing Normal University, April 19-20. He would like to thank Qunhua Liu and Jiaqun Wei for their invitation and hospitality. This work was partially supported by grants of NSF of China(No. 11001185, 2011CB808003).

\section{Discrete stability functions for cyclic quivers}~\label{ss:defintion-stability}
\subsection{Stability functions on abelian categories}
Let $\ca$ be an abelian category over a field $k$. A {\it stability function} on $\ca$ is a group homomorphism
\[Z:\go(\ca)\to \C
\]
from the Grothendieck group $\go(\ca)$ of $\ca$ to the additive group of complex numbers such that for each non-zero object $X$ of $\ca$, the number $Z([X])$ is non-zero, where $[X]$ stands for the canonical image of $X$ in the Grothendieck group $\go(\ca)$. The argument $\arg Z([X])$ of $Z([X])$ is called the {\it phase} of $X$ which lies in the interval $[0,\pi)$.  A non-zero object $X$ of $\ca$ is {\it stable} (resp. {\it semi-stable}) if for each non-zero proper subobject $Y$ of $X$, we have $\arg Z([Y])<\arg Z([X])$ (resp. $\arg Z([Y])\leq \arg Z([X])$). The following will be used frequently and implicity.
\begin{lemma}~\label{l:quotient-stable}
Let $Z:\go(\ca)\to \C$ be a stability function on $\ca$.
An object $X\in \ca$ is stable (resp. semi-stable) if and only if for any non-trivial quotient $Y$ of $X$, we have $\arg Z([X])<\arg Z([Y])$(resp. $\arg Z([X])\leq \arg Z([Y])$).
\end{lemma}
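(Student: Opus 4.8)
The plan is to reduce the statement to a purely elementary fact about three complex numbers related by addition, using additivity of $Z$ on $\go(\ca)$. If $Y$ is a non-zero proper subobject of $X$ with quotient $W=X/Y$, then the short exact sequence $0\to Y\to X\to W\to 0$ yields $[X]=[Y]+[W]$ in $\go(\ca)$, hence $Z([X])=Z([Y])+Z([W])$; moreover $Y\neq 0$ and $W\neq 0$, so by the defining property of a stability function all three values are non-zero and their phases lie in $[0,\pi)$. The assignment $Y\mapsto X/Y$ is a bijection between the non-zero proper subobjects of $X$ and the non-trivial quotients of $X$, with inverse sending a quotient $W$ to the kernel of $X\twoheadrightarrow W$. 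So it suffices to prove the following ``see-saw'' statement: whenever $a=b+c$ with $a,b,c\in\C\setminus\{0\}$ and $\arg a,\arg b,\arg c\in[0,\pi)$, one has $\arg b<\arg a$ if and only if $\arg a<\arg c$, and likewise $\arg b\le\arg a$ if and only if $\arg a\le\arg c$.

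For the see-saw statement I would argue via imaginary parts of products. For non-zero $u,v$ with $\arg u,\arg v\in[0,\pi)$ we have $\arg v-\arg u\in(-\pi,\pi)$ and $\bar u v=|u|\,|v|\,e^{i(\arg v-\arg u)}$, so $\Im(\bar u v)=|u|\,|v|\sin(\arg v-\arg u)$ has the same sign (positive, negative, or zero) as $\arg v-\arg u$. Now $\Im(\bar b a)=\Im(\bar b(b+c))=\Im(\bar b c)$ and $\Im(\bar a c)=\Im((\bar b+\bar c)c)=\Im(\bar b c)+|c|^2\Im(1)=\Im(\bar b c)$, so $\Im(\bar b a)=\Im(\bar a c)$. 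Hence $\arg a-\arg b$ and $\arg c-\arg a$ carry the same sign, which is exactly the desired equivalence for both $<$ and $\le$ simultaneously. One should also record, for internal consistency, that $a=b+c$ has $\arg a\in[0,\pi)$: its imaginary part $|b|\sin\arg b+|c|\sin\arg c$ is $\ge 0$, and if it vanishes then $\arg b=\arg c=0$, forcing $\arg a=0$.

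With the see-saw statement established the lemma is immediate: $X$ is stable iff $\arg Z([Y])<\arg Z([X])$ for every non-zero proper subobject $Y$, iff (apply the see-saw with $a=Z([X])$, $b=Z([Y])$, $c=Z([X/Y])$) $\arg Z([X])<\arg Z([X/Y])$ for every such $Y$, iff $\arg Z([X])<\arg Z([W])$ for every non-trivial quotient $W$ of $X$, using the subobject–quotient bijection above. The semi-stable case is word for word the same with every ``$<$'' replaced by ``$\le$''. I expect the only point requiring any care to be the degenerate case in which some of the phases coincide; the computation above handles equality and strict inequality uniformly through the single quantity $\Im(\bar b c)$, so no separate case analysis is in fact needed, and the rest of the argument is bookkeeping.
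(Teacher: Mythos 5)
Your proof is correct. For what it is worth, the paper gives no proof of this lemma at all: it is stated (together with the companion Lemma~\ref{l:phase of subobj-quotient}) as a standard fact ``to be used frequently and implicitly,'' so there is no argument in the paper to compare against. Your reduction is exactly the expected one: the bijection $Y\mapsto X/Y$ between non-zero proper subobjects and non-trivial quotients, plus the ``see-saw'' fact that for $a=b+c$ with all three phases in $[0,\pi)$ the signs of $\arg a-\arg b$ and $\arg c-\arg a$ agree. That see-saw fact is precisely the content of the paper's Lemma~\ref{l:phase of subobj-quotient}(1) (in its weak form), which the paper also leaves unproved; your computation $\Im(\bar b a)=\Im(\bar b c)=\Im(\bar a c)$, combined with the observation that $\Im(\bar u v)$ has the same sign as $\arg v-\arg u$ when both phases lie in $[0,\pi)$, is a clean way to establish the strict and non-strict versions simultaneously without any case analysis. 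The only stylistic remark is that your verification that $\arg a\in[0,\pi)$ is not logically needed, since the definition of a stability function already guarantees that every non-zero object has phase in $[0,\pi)$; but including it does no harm.
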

\begin{lemma}~\label{l:phase of subobj-quotient}
Let $Z:\go(\ca)\to \C$ be a stability function on $\ca$ and $Y$ be a subobject of $X$ in $\ca$.
 Then
 \begin{itemize}
 \item[$(1)$]$\arg Z([Y])\leq \arg Z([X])$ if and only if $\arg Z([X/Y])\geq \arg Z([X])$;
 \item[$(2)$] if $\arg Z([Y])=\arg Z([X/Y])$, then $\arg Z([X])=\arg Z([Y])=\arg Z([X/Y])$.
 \end{itemize}

\end{lemma}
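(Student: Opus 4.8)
The plan is to work directly with the short exact sequence $0 \to Y \to X \to X/Y \to 0$, which gives $[X] = [Y] + [X/Y]$ in $\go(\ca)$, and hence $Z([X]) = Z([Y]) + Z([X/Y])$ in $\C$. So the three complex numbers $Z([Y])$, $Z([X])$, $Z([X/Y])$ form a triangle (a ``vector addition'' picture), each lying in the open upper half-plane since all three objects $Y$, $X$, $X/Y$ are non-zero (for $(1)$ we may assume $Y$ is a non-zero proper subobject, the boundary cases being trivial). The statement is then a purely planar fact: if a vector $z = z_1 + z_2$ with $z, z_1, z_2$ all in the open upper half-plane, then $\arg z_1 \le \arg z$ if and only if $\arg z \le \arg z_2$, and equality of $\arg z_1$ and $\arg z_2$ forces all three arguments to coincide.

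For part $(1)$, I would make this precise by the standard device of rotating and scaling: multiply everything by a nonzero complex number so that $Z([X])$ becomes a positive real number, i.e.\ $\arg Z([X]) = 0$ after the rotation (this preserves all the argument inequalities since the arguments stay in $[0,\pi)$, or one passes to $(-\pi/2, \pi/2)$ carefully). Writing $Z([Y]) = a + bi$ and $Z([X/Y]) = c + di$ with $b, d \ge 0$ and $a+c > 0$, $b+d = 0$, we get $b = d = 0$, so $Z([Y])$ and $Z([X/Y])$ are both real; then $\arg Z([Y]) \le \arg Z([X]) = 0$ means $\arg Z([Y]) = 0$, forcing $a > 0$, hence $c = (a+c) - a$ could be negative only if \dots{} — more cleanly, in the rotated picture $\arg Z([Y]) \le 0$ is equivalent to $Z([Y])$ lying in the closed lower-half-plane, and since its imaginary part is $-d \le 0$ automatically, the condition $\arg Z([Y]) \le \arg Z([X])$ translates to a condition relating the real parts, and symmetrically for $X/Y$; chasing through, $\arg Z([Y]) \le \arg Z([X])$ $\iff$ $\arg Z([X/Y]) \ge \arg Z([X])$. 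The honest way to organize this is: $\arg$ is (weakly) monotone along the segment from $Z([Y])$ to $Z([X])$ obtained by adding successive positive multiples — but the truly clean argument is just: for $z_1, z_2$ in the upper half-plane, $\arg(z_1 + z_2)$ lies strictly between $\min(\arg z_1, \arg z_2)$ and $\max(\arg z_1, \arg z_2)$ unless $\arg z_1 = \arg z_2$, in which case it equals their common value. I would prove this sub-claim once and deduce both parts from it.

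For part $(2)$: if $\arg Z([Y]) = \arg Z([X/Y])$, then $Z([Y])$ and $Z([X/Y])$ are positive real multiples of the same unit vector $e^{i\theta}$, so their sum $Z([X])$ is also a positive real multiple of $e^{i\theta}$, whence $\arg Z([X]) = \theta$ as well. This is immediate from the sub-claim and does not need the case analysis.

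The main obstacle, such as it is, is purely bookkeeping: arguments are valued in $[0,\pi)$ rather than in all of $\R$, so one must be a little careful that adding two upper-half-plane vectors cannot ``wrap around'' past $\pi$ or $0$ — but it cannot, precisely because the sum of two vectors each with nonnegative imaginary part again has nonnegative imaginary part, and it is zero only when both summands are real and the argument is $0$. Isolating and proving the planar sub-claim ``$\arg(z_1+z_2)$ is a weak convex combination of $\arg z_1$ and $\arg z_2$, strict unless they agree'' is the one place that deserves a careful line or two; everything else is a one-line consequence of $[X] = [Y] + [X/Y]$.
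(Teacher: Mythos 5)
Your argument is correct, and in fact the paper gives no proof of this lemma at all --- it is stated as an evident consequence of $Z([X])=Z([Y])+Z([X/Y])$ and the planar fact you isolate, namely that for two nonzero vectors with arguments in $[0,\pi)$ the argument of their sum lies weakly between the two arguments, strictly unless they coincide. Your proposal supplies exactly that standard argument, so it matches the (implicit) intended proof. One small remark: the intermediate rotation computation (where you write $b+d=0$, hence $b=d=0$) is not right as stated, since after rotating $Z([X])$ onto the positive real axis the other two vectors need not stay in the upper half-plane; but you abandon that route yourself, and the final organization via the sub-claim is clean and sufficient, with the ``no wrap-around'' point correctly addressed.
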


For each real number $\mu\in [0,\pi)$, let $\ca_{\mu}$ be the full subcategory of $\ca$ whose objects are the zero object and the semi-stable objects of phase $\mu$. It has been proved by King~\cite{King94} that $\ca_{\mu}$  is an abelian subcategory of $\ca$ and the simple objects are precisely the stable objects of phase $\mu$. Moreover, each object $X$ admits a unique filtration called the {\it Harder-Narasimhan filtration}
\[0=X_0\subset X_1\subset\cdots\subset X_t=X
\]
whose subquotients are semi-stable with strictly decreasing phases.

A stability function $Z:\go(\ca)\to \C$ is {\it discrete}, if for each real number $\mu$, the subcategory $\ca_{\mu}$ is zero or admits a unique simple object. In other words, the stability function $Z:\go(\ca)\to \C$ is discrete if and only if different stable objects have different phases.
 If moreover $\ca_{\mu}$ is  semisimple, we then call the stability function $Z:\go(\ca)\to \C$ is {\it completely discrete}.  We mention that for certain abelian categories, this two definitions coincide to each other. This happens for example the category of finitely generated right $kQ$-modules for any Dynkin quiver $Q$.
\begin{remark}
The definition of discrete is weaker than the one given by Keller~\cite{Keller10}, which is equivalent to the definition of completely discrete.  And completely discrete is useful to obtain quantum dilogarithm identities via Ringel-Hall approach.
\end{remark}
\subsection{Discrete stability functions for cyclic quivers}
Let $k$ be a field.
Let $Q:=\Delta_n$ be the cyclic quiver with  vertices set $Q_0=\{1, 2, \cdots, n\}$ such that arrows are going from $i$ to $i+1$ modulo $n$.
Let $S_i, i\in Q_0$ be the simple $k$-representation over the opposite quiver $\Delta_n^{\operatorname{op}}$ associated to the vertex $i$.
Let $\ct_n$ be the category of nilpotent representations over  $\Delta_n^{\operatorname{op}}$. It is a hereditary abelian category such that every indecomposable representation is uniserial and hence uniquely determined up to isomorphism by its unique socle and length. Denote by $R(i,l), i\in Q_0, l\in \N$ the unique indecomposable representation of length $l$ with socle $S_i$. Hence $\{R(i,l)|i\in Q_0, l\in \N\}$ form a completely representatives set of indecomposable objects of $\ct_n$. Let $\go(\ct_n)$ be the Grothendieck group of $\ct_n$. Clearly, we have $\go(\ct_n)\cong \bigoplus_{i\in Q_0}\Z[S_i]$. Set $\delta:=(1,1,\cdots,1)\in \Z^n$. Note that there are exactly $n$ indecomposable objects in $\ct_n$ up to isomorphism with dimension vector $\delta$. For more details of the structure of $\ct_n$, we refer to ~\cite{Ringel93} or also see  Example~\ref{e:AR-structure} below.
\begin{proposition}~\label{p:finite-stable-objects}
Let $Z:\go(\ct_n)\to \C$ be a  stability function on $\ct_n$. Then
\begin{itemize}
\item[$(1)$] each stable object has length  equal or less than $n$ and hence there are only finitely many stable objects;
\item[$(2)$] at most one stable object has dimension vector $\delta$.
\end{itemize}
\end{proposition}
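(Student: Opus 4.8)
The plan is to reduce everything to the explicit list of indecomposables $R(i,l)$ of $\ct_n$. First I would record that, by King's theorem quoted above, a stable object of phase $\mu$ is a \emph{simple} object of the abelian category $(\ct_n)_\mu$; in particular it is indecomposable, hence isomorphic to some $R(i,l)$. I would also note that $[R(a,n)]=\delta$ for every vertex $a$, because the composition factors of $R(a,n)$ are exactly $S_a,S_{a+1},\dots,S_{a+n-1}$ (indices mod $n$), one copy of each simple; and more generally that among the indecomposables only the ones of length $n$ have class $\delta$, since an interval of $l$ residues modulo $n$ covers each residue exactly once precisely when $l=n$.

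For $(1)$, suppose $X\cong R(i,l)$ is stable with $l\geq n+1$. Since every indecomposable of $\ct_n$ is uniserial, the submodule lattice of $X$ is a chain containing exactly one submodule of each length $0,1,\dots,l$. Let $N$ be its unique submodule of length $n$; then $\soc N=\soc X=S_i$, so $N\cong R(i,n)$ and $[N]=\delta$, and $N$ is a proper nonzero submodule since $1\le n<l$. Let $M$ be the unique submodule of length $l-n\ge 1$; then $X/M$ is uniserial of length $n$, so $[X/M]=\delta$, and $X/M$ is a proper nonzero quotient of $X$ since $M\neq 0$. Applying stability of $X$ to the subobject $N$ gives $\arg Z(\delta)<\arg Z([X])$, while applying it to the quotient $X/M$ via Lemma~\ref{l:quotient-stable} gives $\arg Z([X])<\arg Z(\delta)$ --- a contradiction. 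Hence every stable object has length at most $n$, and since $\ct_n$ has only $n^{2}$ indecomposable objects of length $\le n$ up to isomorphism, there are only finitely many stable objects.

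For $(2)$, let $X$ be stable with $\dimv X=\delta$. By the reduction above $X\cong R(i,n)$ for some $i\in Q_0$. Suppose towards a contradiction that $R(i,n)$ and $R(j,n)$ are both stable with $i\neq j$. They have the same class $\delta$, hence equal phases $\mu$. I would then invoke the general fact that two non-isomorphic stable objects of equal phase admit no nonzero morphism between them: for a hypothetical nonzero $f\colon R(i,n)\to R(j,n)$, the object $\im f$ is simultaneously a quotient of a stable object of phase $\mu$ and a nonzero subobject of a stable object of phase $\mu$, which by Lemma~\ref{l:quotient-stable} and the definition of stability forces both $\im f\cong R(i,n)$ and $\im f\cong R(j,n)$, contradicting $R(i,n)\not\cong R(j,n)$. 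It therefore suffices to exhibit one nonzero morphism $R(i,n)\to R(j,n)$: taking $m$ to be the residue of $i-j$ in $\{1,\dots,n-1\}$, the canonical length-$m$ quotient of $R(i,n)$ has socle $S_{i-m}=S_j$, hence is isomorphic to $R(j,m)$, and $R(j,m)$ is a submodule of $R(j,n)$, so the composite $R(i,n)\twoheadrightarrow R(j,m)\hookrightarrow R(j,n)$ is a nonzero morphism. This contradiction proves $(2)$.

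The reduction step and part $(1)$ are routine once one spots the subobject and quotient of class $\delta$. The point requiring the most care is the homomorphism computation in $(2)$: one must use the uniserial structure of $\ct_n$ both to describe $\Hom_{\ct_n}(R(i,n),R(j,n))$ explicitly and to realize that equality of phases, rather than contradicting stability by itself, only does so in the presence of such a morphism.
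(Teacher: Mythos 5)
Your proof is correct and follows essentially the same strategy as the paper's: reduce to the uniserial indecomposables $R(i,l)$ and exhibit a subobject and a quotient whose phase inequalities clash. The only differences are cosmetic: your part $(1)$ treats all lengths $l>n$ uniformly via the length-$n$ submodule and length-$n$ quotient of class $\delta$, where the paper splits into the cases $l=nt$ and $l=nt+k$, and your part $(2)$ spells out the morphism $R(i,n)\to R(j,n)$ that the paper leaves implicit when it says the claim ``follows Lemma~\ref{l:quotient-stable} easily.''
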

\begin{proof}
The second part follows Lemma~\ref{l:quotient-stable} easily.
For part $(1)$,
let $N\in \ct_n$ be an object of length strictly greater than $n$, then $N=R(i, nt+k)$ for some $i\in Q_0$ and $t\geq 1, 1\leq k<n$ or $N=R(i,nt)$ for $t\geq 2$. If $N=R(i,nt), t\geq 2$, then $N$ admits a proper submodule $R(i,n)$ such that $\arg Z([N])=\arg Z([R(i,n)])$, which implies that $N$ is not stable. If $N=R(i, nt+k)$ for $t\geq 1, 1\leq k<n$, then $R(i,k)$ is a proper submodule and also a quotient module of $N$, which also implies that $N$ is not stable.
\end{proof}
\begin{remark}
By Proposition~\ref{p:finite-stable-objects}, it is not hard to see that there does exist discrete stability functions on $\ct_n$. Indeed, let $\ind \ct_{<n}$ be a representative set of indecomposable objects of length strictly less than $n$.
It is clear that different indecomposable objects in $\ind \ct_{<n}$ have different dimension vectors, we can always choose $Z([S_1]), \cdots, Z([S_n])\in \C^{\times}$ such that $\arg Z([R(1,n)])$ and $\arg Z([M]), M\in \ind \ct_{<n}$ are pairwise different.
\end{remark}

\begin{example}~\label{e:AR-structure}
Let $n=3$. We have the following  Auslander-Reiten quiver of $\ct_3$:
\[\xymatrix@R=0.25cm@C=0.25cm{&~~\vdots~~\ar[dr]&&~~\vdots~~\ar[dr]&&~~\vdots~~\ar[dr]&\\
[2,5]\ar[ur]\ar[dr]&&[3,5]\ar[ur]\ar[dr]&&[1,5]\ar[ur]\ar[dr]&&[2,5]\\
&[3,4]\ar[dr]\ar[ur]&&[1,4\ar[dr]\ar[ur]]&&[2,4]\ar[ur]\ar[dr]&\\
[3,3]\ar[dr]\ar[ur]&&[1,3]\ar[dr]\ar[ur]&&[2,3]\ar[dr]\ar[ur]&&[3,3]\\
&[1,2]\ar[dr]\ar[ur]&&[2,2]\ar[dr]\ar[ur]&&[3,2]\ar[ur]\ar[dr]\\
[1,1]\ar[ur]&&[2,1]\ar[ur]&&[3,1]\ar[ur]&&[1,1]}
\]
where $[i,j]$ stands for the indecomposable object $R(i,j)$.
Since $\go(\ct_3)=\Z[S_1]\oplus \Z[S_2]\oplus \Z[S_3]$, to give a stability function $Z$ on $\ct_3$, it suffices to give the images of $Z([S_i]),i=1,2,3$ over the half plane of $\C$. Let $Z:\go(\ct_3)\to \C$ be the stability function such that $Z([S_1])=2+\sqrt{-1}, Z([S_2])=-2+\sqrt{-1},Z([S_3])=1+2\sqrt{-1}$. It is easy to check that $\arg Z([M]), M\in \ind \ct_{<3}$ and $\arg Z([S_1]+[S_2]+[S_3])$ are pairwise different. Hence  $Z$ is a discrete stability function on $\ct_3$. Moreover, $S_1,S_2,S_3, R[1,2], R[3,3]$ are exactly the stable objects of $\ct_3$.
\end{example}

Let $Z:\go(\ct_n)\to \C$ be a discrete stability function on $\ct_n$ and set $\arg Z(\sum_{i=1}^n[S_i])=\gamma\in [0,\pi)$.
For each $i\in Q_0$, we associate a subset $C_i=\{i, i-1, \cdots, i-t_i\}$ of $Q_0$ (if $x\in \Z$ is not in $Q_0$, we consider its congruence class modulo $n$ in $Q_0$), where $0\leq t_i\leq n-1$, which is the maximum non-negative integer such that
\[\arg Z([S_i]+[S_{i-1}]+\cdots+[S_{i-v}])\geq \gamma ~\text{for all}~ 0\leq v\leq t_i.
\]
Let $Q_0^{\geq \gamma}=\{i\in Q_0|\arg Z([S_i])\geq \gamma\}$ be the subset of $Q_0$. It is obvious that $C_i$ is non-empty if and only if  $i\in Q_0^{\geq \gamma}$.
\begin{lemma}~\label{l:sequence of vertices}
For any $i,j\in Q_0^{\geq \gamma}$, if $C_i\cap C_j\neq \emptyset$, then $C_i\subseteq C_j$ or $C_j\subseteq C_i$.
\end{lemma}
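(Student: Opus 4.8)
The plan is to translate the statement into a combinatorial fact about cyclic partial sums of real numbers and then establish a sharper ``nesting'' property.

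\textbf{Step 1 (linearize the phase condition).} First I would replace comparison of arguments with $\gamma$ by a linear inequality. Since $Z$ sends every nonzero object to a nonzero complex number with argument in $[0,\pi)$, for the class $\alpha$ of a nonzero object one has $\arg Z(\alpha)\geq\gamma$ if and only if $\ell(\alpha)\geq 0$, and $\arg Z(\alpha)<\gamma$ if and only if $\ell(\alpha)<0$, where $\ell(\alpha):=\operatorname{Im}\big(e^{-\sqrt{-1}\gamma}Z(\alpha)\big)$ is additive on $\go(\ct_n)$. Setting $d_k:=\ell([S_k])$ for $k\in Q_0$, the equality $\arg Z(\sum_{i=1}^n[S_i])=\gamma$ gives $\sum_{k\in Q_0}d_k=0$, and since $[S_i]+[S_{i-1}]+\cdots+[S_{i-v}]$ is the class of the uniserial object $R(i-v,v+1)$ of length $v+1\leq n$ we have, with indices modulo $n$,
\[
\ell\big([S_i]+[S_{i-1}]+\cdots+[S_{i-v}]\big)=d_i+d_{i-1}+\cdots+d_{i-v}=:P_i(v).
\]
Thus $i\in Q_0^{\geq\gamma}\Longleftrightarrow d_i\geq 0$, and $t_i$ is exactly the largest $v\in\{0,\dots,n-1\}$ with $P_i(u)\geq 0$ for all $0\leq u\leq v$. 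Since $P_i(n-1)=\sum_{k\in Q_0}d_k=0$, whenever $t_i\leq n-2$ one must have $P_i(t_i+1)<0$ (with $t_i+1\leq n-2$).

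\textbf{Step 2 (a nesting sublemma).} I would then prove: if $i,j\in Q_0^{\geq\gamma}$ and $j\in C_i$, then $C_j\subseteq C_i$. One may assume $j\neq i$ and $t_i\leq n-2$ (if $t_i=n-1$ then $C_i=Q_0$), and write $j=i-s$ with $1\leq s\leq t_i$. Because $C_i$ is then a proper arc of $Q_0$, backward distances from $i$ are unambiguous, and one checks $C_j\not\subseteq C_i$ holds exactly when $t_j\geq t_i-s+1$. Assume this for a contradiction. Then $P_i(s-1)\geq 0$ (since $s-1<t_i$) and $P_j(t_i-s+1)\geq 0$ (since $t_i-s+1\leq t_j$), while splitting the sum defining $P_i(t_i+1)$ into its first $s$ summands and the rest gives
\[
P_i(t_i+1)=P_i(s-1)+P_j(t_i-s+1)\geq 0,
\]
contradicting $P_i(t_i+1)<0$. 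Hence $t_j\leq t_i-s$, so $C_j=\{i-s,i-s-1,\dots,i-s-t_j\}\subseteq\{i,i-1,\dots,i-t_i\}=C_i$.

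\textbf{Step 3 (deduce the lemma).} Given $i,j\in Q_0^{\geq\gamma}$ with $C_i\cap C_j\neq\emptyset$, the claim is clear if $i=j$ or if $t_i=n-1$ or $t_j=n-1$, so assume $i\neq j$ and $t_i,t_j\leq n-2$ and pick $k\in C_i\cap C_j$. If $k=i$ then $i\in C_j$, so $C_i\subseteq C_j$ by Step 2, and symmetrically if $k=j$. Otherwise $k=i-s=j-s'$ with $1\leq s\leq t_i$, $1\leq s'\leq t_j$; assuming $s\leq s'$ (the opposite case being symmetric) one finds $i=j-(s'-s)$ with $1\leq s'-s\leq t_j$, so $i\in C_j$ and again $C_i\subseteq C_j$ by Step 2.

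The main obstacle is Step 2, specifically arranging the bookkeeping so that the set-theoretic statement ``$C_j\not\subseteq C_i$'' becomes the numerical condition $t_j\geq t_i-s+1$ — this is exactly where the hypothesis $t_i\leq n-2$ (making $C_i$ a genuine proper arc) enters — after which the additive splitting $P_i(t_i+1)=P_i(s-1)+P_j(t_i-s+1)$ combined with the maximality fact $P_i(t_i+1)<0$ closes the argument. Everything else reduces to routine arithmetic modulo $n$.
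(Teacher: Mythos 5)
Your proof is correct and follows essentially the same route as the paper: pick a common element $k\in C_i\cap C_j$, compare the backward distances from $i$ and from $j$ to $k$ to conclude that (say) $i\in C_j$, and then deduce $C_i\subseteq C_j$ from the maximality in the definition of $t_j$. Your Step 2, via the linear functional $\ell$ and the splitting $P_i(t_i+1)=P_i(s-1)+P_j(t_i-s+1)$, merely spells out in detail the final step that the paper dispatches with the phrase ``by the definition of $C_j$''.
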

\begin{proof}
Suppose that $C_i\cap C_j\neq \emptyset$ and $s\in C_i\cap C_j$. Then by the definition of $C_i$ and $C_j$, we deduce that $A_i^s:=\{i,i-1, \cdots, s+1, s\}$ is a subset of $C_i$ and $A_j^s:=\{j, j-1, \cdots, s+1,s\}$ is  a subset of $C_j$. Assume that $|A_i^s|\leq |A_j^s|$, then $A_i^s$ is a subset of $A_j^s$. In particular, $i\in C_j$. Now again by the definition of $C_j$, we have $C_i\subseteq C_j$.
\end{proof}
Now we are in the position to state the main result of this section.
\begin{theorem}~\label{t:discrete-structure}
Let $Z:\go(\ct_n)\to \C$ be a discrete stability function on $\ct_n$.
 There is a unique stable object $M\in \ct_n$ such that $\dimv M=\delta=(1,1,\cdots,1)$. In particular, all the nonzero subcategories $(\ct_n)_{\mu}, \mu\in [0,\pi)$, are semisimple  except one which admits $M$ as the unique simple object. As a consequence,
 $\ct_n$ does not admit a completely discrete stability function.

\end{theorem}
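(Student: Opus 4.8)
\emph{Plan.} The target is to produce, for an arbitrary discrete $Z$, a necessarily unique stable object of dimension vector $\delta$, and then to read off the shape of all the categories $(\ct_n)_\mu$. Since a stable object is indecomposable and, by Proposition~\ref{p:finite-stable-objects}, has length at most $n$, the only candidates of dimension vector $\delta$ are $R(1,n),\dots,R(n,n)$, and I would organise everything around the question of which of these is stable. The first move is purely bookkeeping: set $\gamma=\arg Z(\delta)$ and define the homomorphism $\ell:\go(\ct_n)\to\mathbb R$ by $\ell(v)=\operatorname{Im}\!\big(e^{-\sqrt{-1}\,\gamma}Z(v)\big)$, so that for every nonzero effective class $v$ one has $\ell(v)<0$, $=0$ or $>0$ according as $\arg Z(v)<\gamma$, $=\gamma$ or $>\gamma$ (this uses only that $Z(v)$ lies in the upper half plane and $\gamma\in[0,\pi)$). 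Put $a_i=\ell([S_i])$, extend the $a_i$ periodically modulo $n$, and set $S_0=0$, $S_j=a_1+\cdots+a_j$; this descends to $\Z/n$ because $\sum_i a_i=\ell(\delta)=0$. Since $R(a,n)$ is uniserial with submodule chain $R(a,0)\subset R(a,1)\subset\cdots\subset R(a,n)$ and $\ell(\dimv R(a,l))=S_{a+l-1}-S_{a-1}$, I obtain the clean criterion: $R(a,n)$ is stable exactly when $S_{a-1}$ is the \emph{strict} maximum of $S_0,\dots,S_{n-1}$. The maximum is always attained, and strictly at most once, which re-proves Proposition~\ref{p:finite-stable-objects}(2); so the whole theorem hinges on ruling out a tie for the maximum. (Equivalently, via Lemma~\ref{l:sequence of vertices}: $C_j=Q_0$ iff $S_j$ is a maximum, and one must show only one $C_j$ is all of $Q_0$.)

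\emph{The main step: no tie.} Suppose $S_{m_1}=S_{m_2}=\max_j S_j$ with $m_1\ne m_2$ in $\Z/n$. I would look at the two ``complementary'' uniserial modules $I_1=R(m_1+1,\,m_2-m_1)$ and $I_2=R(m_2+1,\,m_1-m_2)$, both of length $<n$. A one-line computation gives $\ell(\dimv I_j)=0$, so each $I_j$ has phase $\gamma$, and for any proper submodule $R(m_1+1,l)$ of $I_1$ one has $\ell(\dimv R(m_1+1,l))=S_{m_1+l}-S_{m_1}\le 0$, that is, phase $\le\gamma$; hence $I_1$, and likewise $I_2$, is \emph{semistable} of phase $\gamma$. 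Now invoke King~\cite{King94}: $(\ct_n)_\gamma$ is abelian with exact inclusion into $\ct_n$ and with the stable objects of phase $\gamma$ as its simple objects. Each nonzero $I_j$ therefore has a simple subobject in $(\ct_n)_\gamma$; being (via the exact inclusion) a $\ct_n$-submodule of the uniserial module $I_j$, it is some $R(m_j+1,l_j)$, hence a stable object of phase $\gamma$ with socle $S_{m_j+1}$. As $m_1\ne m_2$ these two stable objects have distinct socles, so they are non-isomorphic stable objects of the single phase $\gamma$, contradicting discreteness. Hence the maximum is attained at a unique index $m$, and $M:=R(m+1,n)$ is the unique stable object with $\dimv M=\delta$, proving the first assertion.

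\emph{Reading off the $(\ct_n)_\mu$.} For the second assertion, take $\mu\ne\gamma$ with $(\ct_n)_\mu\ne 0$ and let $N=R(a,l)$ be indecomposable in $(\ct_n)_\mu$. If $l\ge n$, then $R(a,n)$ is a submodule of $N$ and $N/R(a,l-n)\cong R(a+l-n,n)$ is a quotient of $N$, both of dimension vector $\delta$ hence of phase $\gamma$; semistability then forces $\mu\ge\gamma$ and $\mu\le\gamma$, i.e.\ $\mu=\gamma$, a contradiction, so $l<n$. If $N$ were not simple in $(\ct_n)_\mu$, a Jordan--H\"older filtration of $N$ there would, being a chain of submodules of the uniserial $N$, begin $0\subset R(a,l_1)\subset R(a,l_1+l_2)\subset\cdots$ with $R(a,l_1)$ and $R(a+l_1,l_2)$ stable of phase $\mu$; but $0<l_1<n$ makes $R(a,l_1)\not\cong R(a+l_1,l_2)$, again contradicting discreteness. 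So every indecomposable of $(\ct_n)_\mu$ is simple, that is, $(\ct_n)_\mu$ is semisimple, while $(\ct_n)_\gamma$ has $M$ as its only simple object.

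\emph{The last assertion.} Finally, $(\ct_n)_\gamma$ is \emph{not} semisimple: applying the same phase bookkeeping to $R(m+1,2n)$ shows it has dimension vector $2\delta$ of phase $\gamma$ and every proper submodule $R(m+1,l)$ of $\ell$-value $S_{m+l}-S_m\le 0$ (write $l=n+l'$ when $n\le l<2n$), so $R(m+1,2n)\in(\ct_n)_\gamma$; then $0\to M\to R(m+1,2n)\to R(m+1,2n)/M\to 0$, with $R(m+1,2n)/M\cong R(m+1+n,n)=M$, is a non-split self-extension of $M$ inside $(\ct_n)_\gamma$. Hence no discrete stability function on $\ct_n$ is completely discrete. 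The only genuinely nontrivial point is the middle step: turning a tie in the maximum into two honestly distinct stable objects of the critical phase $\gamma$; the rest is bookkeeping with uniserial submodules and King's theorem.
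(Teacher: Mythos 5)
Your proof is correct, but it reaches the conclusion by a noticeably different route than the paper. You linearize the phase condition through $\ell(v)=\operatorname{Im}(e^{-\sqrt{-1}\gamma}Z(v))$ and the cyclic partial sums $S_j$, which converts the paper's sets $C_i$ and Lemma~\ref{l:sequence of vertices} into the clean statement ``$R(a,n)$ is stable iff $S_{a-1}$ is the strict maximum''; in this language the \emph{existence} of a maximum is free, so you never need the paper's central combinatorial move, namely the covering argument with the disjoint sets $\ol{C}_{i_1},\dots,\ol{C}_{i_s}$ and the decomposition of $Z(\delta)$ into summands of argument strictly less than $\gamma$. The entire burden then falls on strictness of the maximum, which you settle by a genuinely different mechanism: a tie at $S_{m_1}=S_{m_2}$ produces the two complementary semistable uniserials $I_1,I_2$ of phase $\gamma$, whose simple subobjects in $(\ct_n)_\gamma$ are non-isomorphic stable objects of the same phase, contradicting discreteness. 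The paper instead stops at the weaker conclusion that some $R(i_0+1,n)$ is \emph{semistable} and upgrades it to stable by combining discreteness with the vanishing of self-extensions for indecomposables of length $<n$ (a tie is then excluded only a posteriori via Proposition~\ref{p:finite-stable-objects}(2)). Your approach buys a more transparent combinatorial criterion and makes the role of discreteness explicit at the exact point where it is needed; the paper's approach has the mild advantage that its existence argument (a semistable object of dimension vector $\delta$ always exists) works for an arbitrary, not necessarily discrete, stability function. You also spell out two things the paper leaves implicit: the semisimplicity of $(\ct_n)_\mu$ for $\mu\neq\gamma$ (via the Jordan--H\"older argument on uniserials of length $<n$) and the non-semisimplicity of $(\ct_n)_\gamma$ (via the semistable self-extension $R(m+1,2n)$ of $M$); both verifications are sound and worth having.
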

\begin{proof}
Assume that $\arg Z(\sum_{i=1}^{n}[S_i])=\gamma\in [0, \pi)$. By Lemma~\ref{l:sequence of vertices}, for any $i,j\in Q_0^{\geq \gamma}$, one of the following cases happens: $C_i\subseteq C_j$;$C_j\subseteq C_i$; $C_i\cap C_j=\emptyset$.
Let $B=\{C_{i_1}, \cdots, C_{i_s}\}$ be a subset of $\{C_{i}|i\in Q_0^{\geq \gamma}\}$ such that for any $1\leq x\neq y\leq s$, $C_{i_x}\cap C_{i_y}=\emptyset$ and $Q_0^{\geq \gamma}\subseteq C_{i_1}\cup\cdots \cup C_{i_s}$.  We claim that $s=1$.  Otherwise, suppose that $s\geq 2$.
For each $C_{i}\in B$, set $\ol{C_i}=C_i\cup \{i-t_i-1\}$. It is clear that $\ol{C_i}\cap\ol{C_j}=\emptyset$ for any $C_i\neq C_j\in B$.
Now we have
\[Z(\sum_{i=1}^{n}[S_i])=Z(\sum_{j\in \ol{C}_{i_1}}[S_j])+\cdots+Z(\sum_{j\in \ol{C}_{i_s}}[S_j])+\sum_{j\not\in \ol{C}_{i_1}\cup\cdots\cup\ol{C}_{i_s}}Z([S_j]).
\]
Recall that $\arg Z(\sum_{i=1}^{n}[S_i])=\gamma$, however the arguments of each term on the right are strictly less than $\gamma$, a contradiction.
 Hence there exists a unique $i_0\in Q_0^{\geq \gamma}$ such that $Q_0^{\geq \gamma}\subseteq C_{i_0}$. By the definition of $C_{i_0}$, one can show that $C_{i_0}=Q_0$ as above. Now the condition of $C_{i_0}$ also implies that $M=R(i_0+1, n)$ is a semi-stable object for $Z$ following Lemma~\ref{l:quotient-stable}.  Note that $Z:\go(\ct_n)\to \C$ is discrete.
  If $M$ is not stable, then it is an iterated extension of certain stable object whose length is strictly less than $n$. However, an indecomposable object with length strictly less than $n$ does not have non-trivial self extension, a contradiction. Hence $M$ is a stable object with non-trivial self extension.
\end{proof}

\section{Quantum dilogarithm identities for cyclic quivers}
\subsection{Hall algebra of $\ct_n$ and quantum torus}
Keep the notations as previous section. Assume moreover that $k$ is a finite field.  For any $L,M,N\in \ct_n$, let $F_{L,M}^N$ be the number of submodules $L'$ of $N$ which are isomorphic to $L$ and such that $N/L'\cong M$.

Let $E$ be a finite field extension of $k$. For any $k$-vector space $V$, let $V^E:=V\otimes_kE$. Clearly, $k\Delta_n^E$ is the path algebra of $\Delta_n$ over $E$. Let $\ct_n^E$ be the category of nilpotent $E$-representations over the opposite quiver $\Delta^{\opname{op}}$. For any $L\in \ct_n$, we have $L^E\in \ct_n^E$.  In fact, the structure of $\ct_n$ is independent of the choice of the base field $k$. For any $M\in \ct_n$, it is clear that we have a polynomial $\Aut_M(q)\in \Z[q]$ such that $|\Aut(M^E)|=\Aut_M(|E|)$ for any finite filed extension $E$ of $k$.

 For any $L,M,N\in \ct_n$, it has been shown by Ringel ~\cite{Ringel93} ({\it cf} also ~\cite{Guo95,Peng-Zhang2001})that there exists a polynomial $\phi_{L,M}^N\in \Z[q]$ such that  for any finite field extension $E$ of $k$,
\[\phi_{L,M}^N(|E|)=F_{L^E,M^E}^{N^E}.
\]
The polynomial $\phi_{L,M}^N$ is called the {\it Hall polynomial} associated to $L,M,N$.

Let $\opname{Iso}(\ct_n)$ be the representative set of isomorphism class of objects in $\ct_n$.
The generic opposite Hall algebra $\ch_q(\ct_n)$ over $\Z[q]$ is a free $\Z[q]$-module with a basis $\{u_L|L\in \opname{Iso(\ct_n)}\}$ index by the isoclass of $\ct_n$. In particular, $\ch_q(\ct_n)=\bigoplus_{L\in \opname{Iso}(\ct_n)}\Z[q]u_L$ and the multiplication is defined as follows
\[u_L*u_M=\sum_{N\in \opname{Iso}(\ct_n)}\phi_{L,M}^N(q)u_N.
\]
It is an associative algebra with unit $u_0$({\it cf.}~\cite{Ringel90}).
Let $\hat{\ch}_q(\ct_n)$ be the completion of $\ch_q(\ct_n)$ with respect to the ideal generated by $u_L$ for all non-zero $L\in \opname{Iso}(\ct_n)$.

Recall that we have the Euler bilinear form $\chi(-,-)$ over $\go(\ct_n)$, for any $M,N\in \ct_n$,
\[\chi([M], [N])=\sum_{i\geq 0}(-1)^i\dim_k\Ext^i_{kQ}(M,N)=\dim_k \Hom_{kQ}(M,N)-\dim_k\Ext^1_{kQ}(M,N).
\]
Let $\lambda(-,-)$ be the anti-symmetric  bilinear form associated to $\chi(-,-)$, {\it i.e.}
\[\lambda([M],[N])=\chi([M],[N])-\chi([N],[M])\quad \text{for any}\quad M,N\in \ct_n.
\]

The {\it completed quantum torus} $\hat{\mathbb{T}}_{\Delta_n}=\Q(q^{\frac{1}{2}})[[\go(\ct_n)^+]]$ is defined to be  the formal power series ring of $y^{\dimv M}, M\in
\ct_n$ with relations $y^{\dimv M}\cdot y^{\dimv
N}=q^{\frac{1}{2}\lambda([M], [N])}y^{\dimv M\oplus N}$.  The following lemma is quite clear.
\begin{lemma}~\label{l:center-torus}
Let $\delta=(1,1,\cdots, 1)$. For any $t\in \N$, $y^{t\delta}$ belongs to the center of $\hat{\mathbb{T}}_{\Delta_n}$.
\end{lemma}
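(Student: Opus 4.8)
The plan is to reduce centrality of $y^{t\delta}$ to the vanishing of the antisymmetrized Euler form against $\delta$. The defining relations of $\hat{\mathbb{T}}_{\Delta_n}$ give, for $a,b\in\go(\ct_n)^{+}$,
\[
y^{a}\cdot y^{b}=q^{\frac{1}{2}\lambda(a,b)}\,y^{a+b},\qquad
y^{b}\cdot y^{a}=q^{\frac{1}{2}\lambda(b,a)}\,y^{a+b},
\]
so, since $\lambda$ is antisymmetric, the monomials $y^{a}$ and $y^{b}$ commute precisely when $\lambda(a,b)=0$. Every element of $\hat{\mathbb{T}}_{\Delta_n}$ is a formal $\Q(q^{\frac{1}{2}})$-linear combination of monomials $y^{b}$, $b\in\go(\ct_n)^{+}$, and the product is computed term by term on these monomials; hence $y^{t\delta}$ lies in the center as soon as $\lambda(t\delta,b)=0$ for all $b\in\go(\ct_n)$. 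As $\lambda$ is bilinear, $\go(\ct_n)=\bigoplus_{i}\Z[S_i]$, and $t\delta=t\sum_{i}[S_i]$, this further reduces to the single identity $\lambda(\delta,[S_j])=0$ for every vertex $j\in Q_0$.

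To obtain this identity I would compute the Euler form on the classes of the simples. The category $\ct_n$ is hereditary, so $\chi(-,-)$ descends to $\go(\ct_n)$; its simple objects are $S_1,\dots,S_n$ with $\dim\Hom(S_i,S_j)=\delta_{ij}$ and $\dim\Ext^{1}(S_i,S_j)$ equal to the number of arrows $i\to j$ in $\Delta_n^{\operatorname{op}}$, which is $1$ for exactly one value of $j$ and $0$ otherwise, since $\Delta_n^{\operatorname{op}}$ is a cyclically oriented $n$-cycle. Thus the matrix of $\chi$ in the basis $[S_1],\dots,[S_n]$ is $I-P$, where $P$ is the permutation matrix of that cyclic orientation. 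Since $P$ is a permutation matrix and $\delta=(1,\dots,1)$, we have $\delta P=\delta$ and $P\delta^{\mathsf{t}}=\delta^{\mathsf{t}}$, whence $\delta(I-P)=0=(I-P)\delta^{\mathsf{t}}$, that is $\chi(\delta,-)=0$ and $\chi(-,\delta)=0$; in particular $\lambda(\delta,[S_j])=\chi(\delta,[S_j])-\chi([S_j],\delta)=0$.

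Combining the two steps, $\lambda(t\delta,b)=t\sum_{i}\lambda([S_i],b)=0$ for every $b\in\go(\ct_n)$, so $y^{t\delta}$ commutes with every monomial of $\hat{\mathbb{T}}_{\Delta_n}$ and therefore, after extending $\Q(q^{\frac{1}{2}})$-linearly, with every element of $\hat{\mathbb{T}}_{\Delta_n}$; hence $y^{t\delta}$ is central.

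There is no genuine obstacle here --- the statement is, as remarked, quite clear --- and the only step carrying any content is the computation of $\chi$ on the simples. The single point that demands a moment's care is the orientation convention (which arrow direction produces the nonzero $\Ext^{1}$), but the cyclic symmetry of $\Delta_n$ makes the argument insensitive to it: the Euler matrix of any cyclic orientation of an $n$-cycle is $I$ minus a cyclic permutation matrix, and $\delta$ is annihilated by that matrix from both sides. Equivalently, $\delta$ is the null root of the underlying affine diagram $\widetilde{A}_{n-1}$, so it lies in the radical of the symmetrized Euler form, and the cyclic orientation upgrades this to lying in the radical of $\chi$ itself.
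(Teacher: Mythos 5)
Your proof is correct and is exactly the computation behind the lemma, which the paper simply asserts as "quite clear" without proof: centrality reduces to $\lambda(\delta,-)=0$, which holds because the Euler matrix of a cyclic orientation of the $n$-cycle is $I-P$ with $P$ a cyclic permutation matrix annihilating $\delta=(1,\dots,1)$ from both sides (equivalently, $\delta$ is the null root of $\widetilde{A}_{n-1}$). Nothing to correct.
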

We also have the following integration map from the generic opposite Hall algebra $\ch_q(\ct_n)$ to the quantum torus $\hat{\mathbb{T}}_{\Delta_n}$ due to Reineke~\cite{Reineke2003}.
\begin{lemma}~\label{l:int-map}
There is an algebra homomorphism  $\int:\hat{\ch}_q(\ct_n)\to \hat{\mathbb{T}}_{\Delta_n}$ given by
\[\int u_M=q^{1/2\chi([M],[M])}\frac{y^{\dimv M}}{\opname{Aut}_M(q)}.
\]
\end{lemma}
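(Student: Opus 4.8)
The plan is to define $\int$ on the dense subalgebra $\ch_q(\ct_n)\subseteq\hat{\ch}_q(\ct_n)$ by the stated formula on the basis $\{u_M\}$, to verify by a direct computation that it is an algebra homomorphism into $\Q(q^{1/2})[\go(\ct_n)^+]\subseteq\hat{\mathbb{T}}_{\Delta_n}$, and then to extend it to the completion by continuity. Compatibility with the unit $u_0$ is clear, and by $\Z[q]$-bilinearity multiplicativity amounts to checking $\int(u_L*u_M)=\int(u_L)\cdot\int(u_M)$ for all $L,M\in\ct_n$.

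For this, observe that any $N$ with $\phi_{L,M}^N\neq 0$ satisfies $[N]=[L]+[M]$, so $\dimv N=\dimv L+\dimv M$ and, by bilinearity of $\chi$, $\chi([N],[N])=\chi([L],[L])+\chi([M],[M])+\chi([L],[M])+\chi([M],[L])$. Substituting this together with the relation $y^{\dimv L}\cdot y^{\dimv M}=q^{\frac12\lambda([L],[M])}y^{\dimv L+\dimv M}$ and using $\lambda([L],[M])=\chi([L],[M])-\chi([M],[L])$, each side of the desired equality becomes a scalar multiple of $y^{\dimv L+\dimv M}$; equating the two scalars reduces the claim to the purely numerical identity
\[\sum_{N}\frac{\phi_{L,M}^N(q)}{\Aut_N(q)}=\frac{q^{-\chi([M],[L])}}{\Aut_L(q)\Aut_M(q)}\qquad\text{in }\Q(q),\]
the exponent $-\chi([M],[L])$ being exactly $\tfrac12\lambda([L],[M])-\tfrac12\chi([L],[M])-\tfrac12\chi([M],[L])$.

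To prove this identity I would specialize $q\mapsto|E|$ over finite field extensions $E/k$: both sides are rational functions and there are infinitely many such $E$, so it suffices to prove $\sum_N F_{L^E,M^E}^{N^E}/|\Aut N^E|=|E|^{-\chi([M],[L])}/(|\Aut L^E|\,|\Aut M^E|)$ for each $E$ (using $\phi_{L,M}^N(|E|)=F_{L^E,M^E}^{N^E}$ and $\Aut_M(|E|)=|\Aut M^E|$). This follows from Riedtmann's formula
\[F_{L,M}^N\cdot|\Aut L|\cdot|\Aut M|=\frac{|\Ext^1(M,L)_N|\cdot|\Aut N|}{|\Hom(M,L)|},\]
valid since $\ct_n^E$ is hereditary (so these $\Hom$ and $\Ext^1$ groups are finite and $\Ext^{\geq 2}$ vanishes): summing over isomorphism classes $N$ turns $\sum_N|\Ext^1(M,L)_N|$ into $|\Ext^1(M,L)|$, whence $\sum_N F_{L,M}^N/|\Aut N|=|\Ext^1(M,L)|/(|\Hom(M,L)|\,|\Aut L|\,|\Aut M|)$, and heredity gives $|\Ext^1(M,L)|/|\Hom(M,L)|=|E|^{\dim_E\Ext^1(M,L)-\dim_E\Hom(M,L)}=|E|^{-\chi([M],[L])}$.

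Finally I would extend $\int$ to the completions. Since the length of a module is additive on short exact sequences, $\int$ sends the $k$-th power of the augmentation ideal $\langle u_L\mid L\neq 0\rangle$ of $\ch_q(\ct_n)$ into the ideal of $\hat{\mathbb{T}}_{\Delta_n}$ spanned by the monomials $y^d$ with $\sum_i d_i\geq k$; hence $\int$ is continuous for the relevant topologies, and since $\hat{\mathbb{T}}_{\Delta_n}$ is complete it extends uniquely to an algebra homomorphism $\hat{\ch}_q(\ct_n)\to\hat{\mathbb{T}}_{\Delta_n}$, as asserted. The only genuine ingredient is the Riedtmann-type identity established above — which in this generic form is Reineke's theorem~\cite{Reineke2003} — while the rest is bookkeeping with the Euler form and the length grading.
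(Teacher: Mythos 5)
Your proposal is correct. The paper itself gives no proof of this lemma --- it is stated as a known result and attributed to Reineke~\cite{Reineke2003} --- so there is no in-paper argument to compare against; your write-up supplies exactly the standard proof that the citation points to. The exponent bookkeeping checks out: multiplicativity reduces to $\sum_N \phi_{L,M}^N(q)/\Aut_N(q)=q^{-\chi([M],[L])}/(\Aut_L(q)\Aut_M(q))$, and your derivation of this from the Riedtmann--Peng formula together with heredity of $\ct_n$ (so that $|\Ext^1(M,L)|/|\Hom(M,L)|=|E|^{-\chi([M],[L])}$) is the right mechanism; the specialization to infinitely many finite field extensions legitimately lifts the counting identity to an identity of rational functions in $q$, using the Hall polynomials and the polynomiality of $\Aut_M(q)$ that the paper records. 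The continuity argument via the length grading for passing to the completions is also the standard one and is sound.
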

\vspace{0.5cm}
\subsection{Quantum dilogarithm identities}~\label{ss:quantum-dilogarithm identities}
 Let $q$ be an indeterminant. The {\it
quantum dilogarithm} is the series
\[\E(y)=\sum_{m=0}^{\infty}\frac{q^{\frac{m^2}{2}}}{(q^m-q^{m-1})\cdots(q^m-1)}y^m
\in \Q(q^{\frac{1}{2}})[[y]],
\]
where $\Q(q^{\frac{1}{2}})[[y]]$ is the formal power series ring of
$y$ with coefficients in the rational function field of
$q^{\frac{1}{2}}$.

The following is one of the  main results in this section.
\begin{theorem}~\label{t:quantum-identities}
Let $Z:\go(\ct_n)\to \C$ be a discrete stability function on $\ct_n$. Then
\[\mathbb{E}_{Z}:=\prod_{L ~\text{is stable and } \dimv L\neq \delta}^{\curvearrowright}\mathbb{E}(y^{\dimv L})\in \hat{\mathbb{T}}_{\Delta_n},
\]
where the factors are in the order of decreasing phases, does not depend on the choice of $Z$.
\end{theorem}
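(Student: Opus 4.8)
The plan is to run Reineke's Harder--Narasimhan factorization argument inside the completed opposite Hall algebra $\hat{\ch}_q(\ct_n)$ and push it through the integration map $\int$ of Lemma~\ref{l:int-map}; the only new phenomenon compared with the acyclic case is the single ``bad'' phase produced by the self-extending stable object $M$ of dimension vector $\delta$ furnished by Theorem~\ref{t:discrete-structure}. Concretely, I want to show that for every discrete stability function $Z$ one has
\[
\int\Big(\sum_{N\in\ct_n}u_N\Big)=\Phi(y^{\delta})\cdot\mathbb{E}_Z
\]
in $\hat{\mathbb{T}}_{\Delta_n}$, where the sum is over isoclasses and $\Phi(y^{\delta})$ is an explicit power series in $y^{\delta}$, with constant term $1$, depending only on $q$ (and not on $Z$). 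Since the left-hand side does not involve $Z$ at all and $\Phi(y^{\delta})$ is invertible, this gives $\mathbb{E}_Z=\Phi(y^{\delta})^{-1}\int\big(\sum_{N}u_N\big)$, which is the assertion.

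First I would set up the factorization. By Proposition~\ref{p:finite-stable-objects} there are only finitely many stable objects, so there are only finitely many phases $\mu_1>\mu_2>\cdots>\mu_r$ in $[0,\pi)$ with $(\ct_n)_{\mu_j}\neq 0$; for each put $E_j:=\sum_{N\in(\ct_n)_{\mu_j}}u_N$, a well-defined element of $\hat{\ch}_q(\ct_n)$ because each dimension vector is realised by only finitely many objects of $(\ct_n)_{\mu_j}$. Uniqueness of the Harder--Narasimhan filtration shows, exactly as in Reineke's work, that every $u_N$ occurs with coefficient $1$ in the ordered product $E_1*E_2*\cdots*E_r$, that is,
\[
\sum_{N\in\ct_n}u_N=E_1*E_2*\cdots*E_r .
\]
Applying the algebra homomorphism $\int$ turns this into $\int\big(\sum_N u_N\big)=\prod_{1\le j\le r}^{\curvearrowright}\int(E_j)$, the product taken in order of decreasing phase.

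Next I would evaluate the factors. By Theorem~\ref{t:discrete-structure} there is exactly one index $j_0$ for which $(\ct_n)_{\mu_{j_0}}$ has the self-extending object $M$, with $\dimv M=\delta$, as its unique simple object; for every other $j$ the subcategory $(\ct_n)_{\mu_j}$ is semisimple with a unique simple object $L_j$, necessarily a stable object of length $<n$, hence rigid with $\End L_j=k$, so that $\chi([L_j],[L_j])=1$ while $\lambda([L_j],[L_j])=0$. Thus the objects of $(\ct_n)_{\mu_j}$ are exactly the $L_j^{\oplus m}$, the polynomial $\Aut_{L_j^{\oplus m}}(q)$ is the order polynomial of $\GL_m$, and a direct term-by-term computation using Lemma~\ref{l:int-map} gives
\[
\int(E_j)=\sum_{m\ge 0}\frac{q^{m^2/2}}{(q^m-q^{m-1})\cdots(q^m-1)}\,(y^{\dimv L_j})^m=\mathbb{E}(y^{\dimv L_j}).
\]
As $j$ runs over $\{1,\dots,r\}\setminus\{j_0\}$ in order of decreasing phase these are precisely the factors appearing in $\mathbb{E}_Z$. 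For $j=j_0$: since the Euler form factors through dimension vectors and $\chi(\delta,\delta)=n-n=0$, the category $(\ct_n)_{\mu_{j_0}}$ has $M$ as its unique simple object with $\dim_k\End M=1$ and $\dim_k\Ext^1(M,M)=1$, hence is equivalent to the category of finite-length nilpotent representations of the one-loop quiver; its objects $M_\lambda$ are indexed by partitions $\lambda$, with $\dimv M_\lambda=|\lambda|\,\delta$, and again $\chi([M_\lambda],[M_\lambda])=0$. Therefore
\[
\Phi(y^{\delta}):=\int(E_{j_0})=\sum_{\lambda}\frac{y^{|\lambda|\delta}}{\Aut_{M_\lambda}(q)}=\sum_{t\ge 0}\Big(\sum_{|\lambda|=t}\frac{1}{\Aut_{M_\lambda}(q)}\Big)y^{t\delta},
\]
whose coefficients are the universal automorphism counts of nilpotent matrices and involve neither $Z$ nor the particular object $M$.

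Finally, by Lemma~\ref{l:center-torus} every $y^{t\delta}$, and hence $\Phi(y^{\delta})$, is central in $\hat{\mathbb{T}}_{\Delta_n}$, so the factor $\int(E_{j_0})$ can be extracted from the ordered product irrespective of where $\mu_{j_0}$ lies among $\mu_1>\cdots>\mu_r$; this yields the displayed identity $\int\big(\sum_N u_N\big)=\Phi(y^{\delta})\cdot\mathbb{E}_Z$, and the theorem follows as indicated in the first paragraph. I expect the main difficulty to be the treatment of the bad phase $\mu_{j_0}$: one has to verify carefully that $(\ct_n)_{\mu_{j_0}}$ really is the category of nilpotent representations of the one-loop quiver, so that its indecomposables are the $R(i_0+1,tn)$ and its automorphism polynomials are the classical ones, and that the Euler form vanishes identically there; one must also be scrupulous about the bookkeeping of the iterated Hall product and of the completion. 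By contrast, the computation of $\int(E_j)$ for the ordinary phases and the centrality argument are routine and follow Reineke almost verbatim.
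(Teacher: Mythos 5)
Your proposal is correct and follows essentially the same route as the paper's proof: the Harder--Narasimhan factorization in $\hat{\ch}_q(\ct_n)$, the integration map of Lemma~\ref{l:int-map}, the identification of each semisimple phase with a quantum dilogarithm factor, and the extraction of the central, $Z$-independent factor coming from the phase of the $\delta$-dimensional stable object via Lemma~\ref{l:center-torus}. The only difference is that you compute that factor $\Phi(y^{\delta})$ explicitly as a sum over partitions, whereas the paper simply observes that the subcategory $\ca_{Z,\delta}$ is independent of $Z$ up to equivalence and that its integral is invertible.
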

\begin{proof}
Let $Z$ be a discrete stability function on $\ct_n$. The Harder-Narasimhan filtration with respect to $Z$ implies the following identity in $\ch_q(\ct_n)$
\[\sum_{L\in \opname{Iso}(\ct_n) }u_L=\prod_{\mu\in [0,\pi)}^{\curvearrowright} (\sum_{L\in \opname{Iso}(\ca_{\mu})}u_L),
\]
where the factor are in the order of  decreasing  $\mu$ and $\ca_\mu:=(\ct_n)_\mu$ is the subcategory of $\ct_n$ with semi-stable objects of phase $\mu$ and the zero objects. Proposition~\ref{p:finite-stable-objects} implies that all but finitely many $\ca_\mu$ are zero, say $\mu_1>\mu_2>\cdots>\mu_t$. Let  $L_i$ be the unique simple object of $\ca_{\mu_i}, i=1, \cdots, t$. Moreover, by Theorem~\ref{t:discrete-structure}, there is a unique $s\in \{1,2,\cdots, t\}$ such that $\ca_{Z,\delta}:=\ca_{\mu_s}$ is not semisimple and $\dimv L_s=\delta$.
We may rewrite the above identity as
\[\sum_{L\in \opname{Iso}(\ct_n) }u_L=\prod_{\mu_1>\mu_2>\cdots> \mu_t}^{\curvearrowright} (\sum_{L\in \opname{Iso}(\ca_{\mu_i})}u_L).
\]
Applying the integration map in Lemma~\ref{l:int-map}, we have
\[\int\sum_{L\in \opname{Iso}(\ct_n) }u_L=\prod_{\mu_1>\mu_2>\cdots> \mu_t}^{\curvearrowright} \int(\sum_{L\in \opname{Iso}(\ca_{\mu_i})}u_L).
\]
Set
\[\mathbb{E}_{Z,\delta}:=\int \sum_{L\in \opname{Iso}(\ca_{\mu_s})} u_L.
\]
By Lemma~\ref{l:center-torus}, we deduce that $\mathbb{E}_{Z,\delta}$ belongs to the center of quantum torus $\hat{\mathbb{T}}_Q$.
On the other hand, for each $\ca_{\mu_i}, i\neq s$, since $\ca_{\mu_i}$ is semisimple wiht unique simple object $L_i$, we have \[\int \sum_{L\in \opname{Iso}(\ca_{\mu_i})}u_L=\mathbb{E}(y^{\dimv L_i}).\]

Hence,
\begin{eqnarray*}
\int\sum_{L\in \opname{Iso}(\ct_n) }u_L&=&\prod_{\mu_1>\mu_2>\cdots> \mu_t}^{\curvearrowright} \int(\sum_{L\in \opname{Iso}(\ca_{\mu_i})}u_L)\\
&=&\mathbb{E}_{Z,\delta}\prod_{i\neq s}^{\curvearrowright}\int(\sum_{L\in \opname{Iso}(\ca_{\mu_i})}u_L)\\
&=&\mathbb{E}_{Z,\delta}\prod_{L ~\text{is stable and } \dimv L\neq \delta}^{\curvearrowright}\mathbb{E}(y^{\dimv L})
\end{eqnarray*}
It is clear that for any discrete stability function, the subcategory $\ca_{Z,\delta}$ is independent of the choice of $Z$ up to equivalence of categories. Thus $\mathbb{E}_{Z,\delta}$ is independent of the choice of $Z$. Note that the left hand side of the above identity is independent of choice of $Z$ and $\mathbb{E}_{Z, \delta}$ is invertible in $\hat{\mathbb{T}}_Q$. Therefore, $\mathbb{E}_Z$ is independent of the choice of $Z$.
\end{proof}

\begin{remark}
Theorem~\ref{t:quantum-identities} implies that the non-commutative Donaldson-Thomas invariant of $\ct_n$ is well-defined({\it cf.} section~4 of ~\cite{Keller10}). Namely, let $\mathbb{T}_{\Delta_n}:=\mathbb{Q}(q^{\frac{1}{2}})[\go^+(\ct_n)]$ be the non-complete quantum torus of $\Delta_n$, {\it i.e.} the $\mathbb{Q}(q^{\frac{1}{2}})$-algebra generated by the variables $y^{\dimv L}, L\in \ct_n$, with the same relations of $\hat{\mathbb{T}}_{\Delta_n}$. Let $\operatorname{Frac}(\mathbb{T}_{\Delta_n})$ be the fraction filed of $\mathbb{T}_{\Delta_n}$ and $\Sigma: \operatorname{Frac}(\mathbb{T}_{\Delta_n})\to  \operatorname{Frac}(\mathbb{T}_{\Delta_n}) $  the automorphism mapping each $y^{\dimv L}$ to $y^{-\dimv L}, L\in \ct_n$. Set $\mathbb{E}_{\Delta_n}:=\mathbb{E}_Z$ for any discrete stability function $Z$ on $\ct_n$ and let $\opname{Ad}(\mathbb{E}_{\Delta_n}):\operatorname{Frac}(\mathbb{T}_{\Delta_n})\to  \operatorname{Frac}(\mathbb{T}_{\Delta_n})$ be the automorphism induced by the conjugation of $\mathbb{E}_{\Delta_n}$. Then the non-commutative Donaldson-Thomas invariant is defined to be the automorphism
\[\opname{DT}_{\Delta_n}=\opname{Ad}(\mathbb{E}_{\Delta_n})\circ \Sigma.
\]

\end{remark}
\vspace{0.5cm}

\subsection{Cyclic quantum dilogarithm  of order $n$}~\label{ss:cyclic quantum}
Let $Q$ be a finite quiver and $\hat{\mathbb{T}}_Q$ the associated completed quantum torus.
A product $\mathbb{E}\in \hat{\mathbb{T}}_Q$ of quantum dilogarithms is called {\it cyclic  of order $n$}({\it cf.}~\cite{BytskoVolkov2013}), if there exists an automorphism $\phi$ of $\hat{\mathbb{T}}_Q$ of order $n$ such that
\[\mathbb{E}=\phi^i(\mathbb{E}), 1\leq i\leq n-1.
\]
 In ~\cite{BytskoVolkov2013}, Bytsko and Volkov have obtained certain cyclic quantum dilogarithm identities of order $3$ for certain quivers. In our setting, we will prove that different choices of discrete stability functions on $\ct_n$ give rise to different cyclic quantum dilogarithm identities of order $n$.

Let $\tau:\ct_n\to\ct_n$ be the Auslander-Reiten translation, which is an auto-equivalence of $\ct_n$ of order $n$. It is clear that $\tau:\ct_n\to \ct_n$ induces an automorphism of Grothendieck group $\go(\ct_n)$ and hence an automorphism of algebra $\hat{\mathbb{T}}_{\Delta_n}$ of order $n$. By abuse of notations, we still denote these automorphisms by $\tau:\go(\ct_n)\to \go(\ct_n)$ and $\tau:\hat{\mathbb{T}}_{\Delta_n}\to \hat{\mathbb{T}}_{\Delta_n}$ respectively. More precisely, $\tau(y^{\dimv M})=y^{\dimv \tau M}$.

For any stability function $Z:\go(\ct_n)\to \C$ on $\ct_n$, we introduce a new stability function $Z_\tau:=Z\circ\tau:\go(\ct_n)\to \C$. We clearly have the followings.
\begin{lemma}~\label{l:tau-stability}
\begin{itemize}
\item[(1)] $Z$ is discrete stability function  if and only if $Z_\tau$ is discrete stability function;
\item[(2)] $M\in \ct_n$ is $Z$-stable if and only if $\tau^{-1}M$ is $Z_\tau$-stable;
\item[(3)] $\arg Z(M)>\arg Z(N)$ if and only if $\arg Z_\tau(\tau^{-1}M)>\arg Z_{\tau}(\tau^{-1}N)$.
\end{itemize}
\end{lemma}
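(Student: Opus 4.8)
The plan is to reduce the whole lemma to a single observation: since $\tau\colon\ct_n\to\ct_n$ is an equivalence of abelian categories, it is exact, preserves the zero object, restricts to an isomorphism between the subobject lattice of any $X$ and that of $\tau^{-1}X$ (with $Y\hookrightarrow X$ corresponding to $\tau^{-1}Y\hookrightarrow\tau^{-1}X$), carries quotients to quotients, and induces the group automorphism $\tau$ of $\go(\ct_n)$ with $[\tau X]=\tau[X]$. Consequently, for every $X\in\ct_n$,
\[Z_\tau([\tau^{-1}X])=Z\bigl(\tau[\tau^{-1}X]\bigr)=Z([X]),\qquad\text{hence}\qquad \arg Z_\tau([\tau^{-1}X])=\arg Z([X]).\]
First I would check that $Z_\tau=Z\circ\tau$ is genuinely a stability function: it is a composite of group homomorphisms $\go(\ct_n)\to\go(\ct_n)\to\C$, and if $X\neq 0$ then $\tau X\neq 0$, so $Z_\tau([X])=Z([\tau X])\neq 0$; the same applies verbatim to $Z_{\tau^{-1}}$, which I will need for part (1).

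With the displayed identity in hand, part (3) is immediate: apply it to $X=M$ and $X=N$. For part (2), I would use that $Y'\mapsto\tau Y'$ is a bijection between the nonzero proper subobjects of $\tau^{-1}M$ and those of $M$; combining this bijection with the displayed identity, the inequality $\arg Z_\tau([Y'])<\arg Z_\tau([\tau^{-1}M])$ is equivalent to $\arg Z([\tau Y'])<\arg Z([M])$, so $\tau^{-1}M$ is $Z_\tau$-stable exactly when every nonzero proper subobject of $M$ has strictly smaller $Z$-phase than $M$, i.e.\ exactly when $M$ is $Z$-stable. For part (1), I would combine (2) with the displayed identity: $M\mapsto\tau^{-1}M$ is a phase-preserving bijection from the $Z$-stable objects onto the $Z_\tau$-stable objects, so the criterion ``distinct stable objects have distinct phases'' holds for $Z$ if and only if it holds for $Z_\tau$; recalling from Section~\ref{ss:defintion-stability} that this criterion is precisely the definition of discreteness, and using that $Z_\tau$ is a stability function, this finishes the iff.

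The hard part is essentially nonexistent: the only point deserving care is the opening claim that the Auslander--Reiten translation of $\ct_n$ is an honest exact auto-equivalence of the \emph{abelian} category $\ct_n$ (rather than merely a derived equivalence), which is where the special structure of $\ct_n$ as a rank-$n$ uniserial ``tube'' — on which $\tau$ acts by rotating the socle of each indecomposable — is used. Granting that, all three parts are routine transport-of-structure arguments and no computation beyond the displayed identity is needed.
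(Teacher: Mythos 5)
Your proof is correct, and it fleshes out exactly the transport-of-structure argument the paper has in mind: the paper states this lemma with no proof at all (``We clearly have the followings''), relying on the fact that $\tau$ is an exact auto-equivalence of $\ct_n$ inducing the automorphism $\tau$ of $\go(\ct_n)$, which is precisely the observation you make precise via $Z_\tau([\tau^{-1}X])=Z([X])$ and the bijection on subobjects.
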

Now we can state another main result in this section.
\begin{theorem}~\label{t:cyclic quantum-identities}
Let $Z$ be a discrete stability function on $\ct_n$. Then we have
\[\mathbb{E}_Z=\tau(\mathbb{E}_Z)=\cdots=\tau^{n-1}(\mathbb{E}_Z)\in \hat{\mathbb{T}}_{\Delta_n}.
\]
\end{theorem}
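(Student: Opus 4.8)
The plan is to deduce this cyclic identity formally from the independence statement of Theorem~\ref{t:quantum-identities}, combined with the compatibility of the Auslander--Reiten translation with stability recorded in Lemma~\ref{l:tau-stability}. The point is that twisting a discrete stability function by $\tau$ produces another discrete stability function whose associated product of quantum dilogarithms is, on one hand, literally $\tau^{-1}$ applied to the original product, and on the other hand equal to the original product by Theorem~\ref{t:quantum-identities}.

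Concretely, fix a discrete stability function $Z$ on $\ct_n$ and set $Z_\tau=Z\circ\tau$. First I would invoke Lemma~\ref{l:tau-stability}(1) to see that $Z_\tau$ is again discrete, so Theorem~\ref{t:quantum-identities} gives $\mathbb{E}_{Z_\tau}=\mathbb{E}_Z$. Next I would identify $\mathbb{E}_{Z_\tau}$ with $\tau^{-1}(\mathbb{E}_Z)$. For this: (i) one records that $\tau(\delta)=\delta$ in $\go(\ct_n)$, since $\tau$ merely permutes the indecomposables $R(i,n)$, all of dimension vector $\delta$, whence $\dimv L\neq\delta$ if and only if $\dimv\tau^{-1}L\neq\delta$; (ii) by Lemma~\ref{l:tau-stability}(2) the $Z_\tau$-stable objects are exactly the $\tau^{-1}L$ for $L$ ranging over the $Z$-stable objects; (iii) by Lemma~\ref{l:tau-stability}(3) the ordering by decreasing phase is preserved under $L\mapsto\tau^{-1}L$; and (iv) since $\lambda$ is antisymmetric, $\lambda(\dimv L,\dimv L)=0$, so $\mathbb{E}(y^{\dimv L})$ is a power series in the single monomial $y^{\dimv L}$ with scalar coefficients, and hence the algebra automorphism $\tau^{-1}$ of $\hat{\mathbb{T}}_{\Delta_n}$ (given by $y^{\dimv M}\mapsto y^{\dimv\tau^{-1}M}$) satisfies $\tau^{-1}(\mathbb{E}(y^{\dimv L}))=\mathbb{E}(y^{\dimv\tau^{-1}L})$. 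Applying $\tau^{-1}$ factorwise to the ordered product defining $\mathbb{E}_Z$ and using (i)--(iii) to re-index, one obtains $\tau^{-1}(\mathbb{E}_Z)=\mathbb{E}_{Z_\tau}$.

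Combining the two identities yields $\tau^{-1}(\mathbb{E}_Z)=\mathbb{E}_Z$, hence $\tau(\mathbb{E}_Z)=\mathbb{E}_Z$, and iterating gives $\tau^i(\mathbb{E}_Z)=\mathbb{E}_Z$ for all $1\le i\le n-1$, which is the claim. There is no genuinely hard step here: all the substance sits in Theorem~\ref{t:quantum-identities}, and the only thing demanding care is the bookkeeping in the middle step, namely checking that applying the algebra automorphism $\tau^{-1}$ to the ordered product of quantum dilogarithms merely relabels the indexing set by $L\mapsto\tau^{-1}L$ without disturbing the decreasing-phase order. As a sanity check I would also mention the more direct argument bypassing $Z_\tau$: the translation $\tau$ fixes $\int\sum_{L\in\opname{Iso}(\ct_n)}u_L$ (it permutes isoclasses and preserves $\chi([L],[L])$ and $\Aut_L$) and fixes the central factor $\mathbb{E}_{Z,\delta}$ (a power series in $y^\delta$, with $\tau(y^\delta)=y^\delta$), so $\mathbb{E}_Z=\mathbb{E}_{Z,\delta}^{-1}\int\sum_L u_L$ is $\tau$-invariant.
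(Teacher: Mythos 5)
Your proposal is correct and follows essentially the same route as the paper: both reduce the claim to the single identity $\tau^{-1}(\mathbb{E}_Z)=\mathbb{E}_{Z_\tau}$ via Lemma~\ref{l:tau-stability} and then conclude with $\mathbb{E}_{Z_\tau}=\mathbb{E}_Z$ from Theorem~\ref{t:quantum-identities}. Your added bookkeeping (notably that $\tau(\delta)=\delta$, so the exclusion of the $\delta$-dimensional stable object is compatible with the re-indexing) is left implicit in the paper but is the right thing to check.
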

\begin{proof}
Since $\tau:\hat{\mathbb{T}}_{\Delta_n}\to \hat{\mathbb{T}}_{\Delta_n}$ is an automorphism of $\hat{\mathbb{T}}_{\Delta_n}$ of order $n$, it suffices to prove that $\mathbb{E}_Z=\tau^{n-1}(\mathbb{E})$. Recall that \[\mathbb{E}_Z=\prod_{L ~\text{is $Z$-stable and } \dimv L\neq \delta}^{\curvearrowright}\mathbb{E}(y^{\dimv L}).\]
We have
\begin{eqnarray*}
\tau^{n-1}(\mathbb{E}_Z)&=&\prod_{L ~\text{is $Z$-stable and } \dimv L\neq \delta}^{\curvearrowright}\mathbb{E}(y^{\dimv \tau^{-1}L})\\
&=&\prod_{\tau^{-1}L ~\text{is $Z_\tau$-stable and } \dimv \tau^{-1}L\neq \delta}^{\curvearrowright}\mathbb{E}(y^{\dimv \tau^{-1}L})\\
&=&\mathbb{E}_{Z_\tau}
=\mathbb{E}_{Z}
\end{eqnarray*}
where the second equality follows from Lemma~\ref{l:tau-stability} and the last equality follows from Theorem~\ref{t:quantum-identities}.
\end{proof}

\section{Examples}~\label{s:examples}
\subsection{The Jacobian algebra $J$ of type $A_2$}
Let us consider the cyclic quiver $\Delta_3$ with $3$ vertices:
\[\xymatrix{&2\ar[dr]^b\\1\ar[ur]^a&&3\ar[ll]^c}
\]
Let $J:=J(\Delta_3, W)$ be the Jacobian algebra of $\Delta_3$ with potential $E=cba$. In other words, $J=k\Delta_3/<ba, cb, ac>$ is the quotient algebra of path algebra $k\Delta_3$ by the ideal generated by all the paths of  length $2$. Let $\mod J$ be the category of finite-dimensional right $J$-modules, which may be identified as a subcategory of $\ct_3$. Under such identification, we have $\go(\ct_3)=\go(\mod J)$. Hence a stability function $Z:\go(\mod J)\to \C$ on $\mod J$ naturally identifies a stability function $Z:\go(\ct_3)\to \C$ on $\ct_3$. However, a discrete stability function $Z$ on $\mod J$ may not be a discrete stability function on $\ct_3$.
Fortunately, for each discrete stability function $Z:\go(\mod J)\to \C$ on $\mod J$, it is not hard to see that there is a discrete stability function $Z_{\varepsilon}:\go(\mod J)\to \C$ on $\mod J$ such that its sequence of stable objects with decreasing phases coincides with the one of $Z$ and $Z_{\varepsilon}$ is also a discrete stability function on $\ct_3$.

Let $Z:\go(\mod J)\to \C$ be a discrete stability function on $\mod J$ and $Z_{\varepsilon}:\go(\mod J)\to \C$ a replacement of $Z$ which is also a discrete stability function on $\ct_3$ . By Theorem~\ref{t:discrete-structure}, we infer that the set of $Z_{\varepsilon}$-stable objects of $\ct_3$ is the union of the set of $Z_{\varepsilon}$-stable objects (and hence the set of $Z$-stable objects) of $\mod J$  with a unique stable object of $\ct_3$ with dimension vector $(1,1,1)$. Applying Theorem~\ref{t:quantum-identities}, one deduces that the Conjecture $3.2$ of ~\cite{Keller10} holds true for the Jacobian algebra $J$. Namely, for $\mod J$,
 \[\mathbb{E}_Z:=\prod_{M~\text{is stable}}^{\curvearrowright}\mathbb{E}(y^{\dimv M})\in \hat{\mathbb{T}}_{\Delta_3}\]
is independent of the choice of the discrete stability function $Z$ on $\mod J$.
We should mention that by the recent work of Engenhorst~\cite{Engenhorst2013}, Conjecture $3.2$ of ~\cite{Keller10} holds true for any representation-finite Jacobian algebras.

\subsection{Quantum dilogarithm identities of $A_{n-1}$}

Let \[A_{n-1}:1\to 2\to \cdots\to n-1\] be the linear quiver and $\mod kA_{n-1}$ the category of finitely generated right $kA_{n-1}$-modules.

 Recall that by Gabriel's theorem, there is a bijection between the set of the dimension vectors of indecomposable right $kA_{n-1}$-modules to the set of positive root of the associated simple Lie algebra of type $A_{n-1}$. For any positive root $\gamma$, denote by $V(\gamma)$ the unique indecomposable right $kA_{n-1}$-module under this bijection.
We endow the set of positive roots with the smallest order relation such that $\Hom(V(\alpha), V(\beta))\neq 0$ implies $\alpha\preceq \beta$.
Let $\alpha_1, \cdots, \alpha_N$ are the dimension vectors of the indecomposable right $kA_{n-1}$-modules enumerated in decreasing order with $\preceq$.

Identifying $A_{n-1}$ as the full subquiver of $\Delta_n$ supported on vertices $\{1, 2,\cdots, n-1\}$, it induces an embedding of abelian categories  $F:\mod kA_{n-1}\to \ct_n$. Moreover, the functor $F$ also induces an embedding of quantum torus $i_{F}:\hat{\mathbb{T}}_{A_{n-1}}\to \hat{\mathbb{T}}_{\Delta_n}$. Let $Z_1:\go(\ct_n)\to \C$ be a   discrete stability function such that $\arg Z_1([S_1])>\arg Z_1([S_2])>\cdots>\arg Z_1([S_n])$ and $\arg Z_1(\sum_{i=1}^n[S_n])<\arg Z_1([S_{n-1}])$. A directed computation show that $S_1=L(1,1),\cdots, S_{n-1}=L(n-1,1), L(n,n),L(n,n-1), \cdots, S_n=L(n,1)$ are precisely the stable objects with decreasing phases with respect to $Z_1$.
 Note that different indecomposable objects in $\ind \ct_{<n}$ have different dimension vectors. A generically choosing of $Z_2([S_i]), i=1, \cdots, n$ in $\C$ such that $\arg Z_2([S_n])<\arg Z_2([S_1])<\cdots<\arg Z_2([S_{n-1}])$ and $ \arg Z_2(\sum_{i=1}^n[S_n])<\arg Z_2([S_{1}])$ leads $Z_2$ to be a discrete stability function on $\ct_n$. Moreover,
$V(\alpha_1), \cdots, V(\alpha_N), L(n,n),L(n,n-1), \cdots, S_n=L(n,1)$ are precisely the stable objects with decreasing phases with respect to $Z_2$. Applying Theorem~\ref{t:quantum-identities}, we have
\begin{eqnarray*}
&&\mathbb{E}(y^{\dimv S_1})\cdots \mathbb{E}(y^{\dimv S_{n-1}})\mathbb{E}(y^{\dimv L(n,n-1)})\cdots \mathbb{E}(y^{\dimv L(n,1)})\\
&=&\mathbb{E}(y^{\alpha_1})\cdots \mathbb{E}(y^{\alpha_N})\mathbb{E}(y^{\dimv L(n,n-1)})\cdots \mathbb{E}(y^{\dimv L(n,1)}).
\end{eqnarray*}
Since each $\mathbb{E}(y^{\dimv M})$ is invertible in $\hat{\mathbb{T}}_{\Delta_n}$, we have the following quantum dilogarithm identities
\[\mathbb{E}(y^{\dimv S_1})\cdots \mathbb{E}(y^{\dimv S_{n-1}})=\mathbb{E}(y^{\alpha_1})\cdots \mathbb{E}(y^{\alpha_N}).
\]
which  essentially lies in the completed quantum torus $\hat{\mathbb{T}}_{A_{n-1}}$. This recover the well-known quantum dilogarithm identities for $A_{n-1}$({\it cf}. Corollary $1.7$ in ~\cite{Keller10}).

\end{document}